\newtheorem{theorem}{Theorem}[subsection]
\newtheorem{acknowledgement*}{Acknowledgement}
\newtheorem{corollary}[theorem]{Corollary}
\newtheorem{definition}[theorem]{Definition}
\newtheorem{lemma}[theorem]{Lemma}
\newtheorem{proposition}[theorem]{Proposition}
\newtheorem{remark}[theorem]{Remark}
\newcommand{\A}{\mathcal{A}}
\newcommand{\HH}{\mathcal{G}}
\newcommand{\I}{\mathcal{I}}
\newcommand{\F}{\mathcal{F}}
\title[A quantum analogue of Kostant's theorem]{A quantum analogue of Kostant's theorem for the general
linear group}
\begin{document}

\author{Avraham Aizenbud}
\email{aizenr@yahoo.com}
\address{Department of Mathematics, Weizmann Institute of Science,
Ziskind Building,
Rehovot 76100, ISRAEL}
\author{Oded Yacobi}
\email{yacobi@gmail.com}
\address{School of Mathematical Sciences, Tel Aviv University, Tel Aviv
69978, Israel} \subjclass[2000]{16T20, 17B37, 20G42, 81R50}

\maketitle

\begin{abstract}
A fundamental result in representation theory is Kostant's theorem which
describes the algebra of polynomials on a reductive Lie algebra as a module
over its invariants.  We prove a quantum analogue of this theorem
for the general linear group, and from this deduce the analogous result for reflection equation algebras.
\end{abstract}

\section{Introduction}
A classical theorem of Kostant's states that the algebra of polynomials $\mathcal{O}(\mathfrak{g})$ on a reductive Lie algebra $\mathfrak{g}$ is free as a module over the invariant polynomials
$\mathcal{O}(\mathfrak{g})^{G}$ (see \cite{K}).  This result, which was later generalized by Kostant and Rallis to arbitrary symmetric pairs (see \cite{KR}), is fundamental to
representation theory.  In particular, it plays an important role in understanding geometric properties of the nilpotent cone, and representation theoretic properties of its ring of regular functions (see e.g. Chapter 6 of \cite{CG}).

  In this paper we prove a quantum analog of Kostant's Theorem for the general linear group.  Namely, we show that the coordinate ring of quantum matrices $\A=\mathcal{O}(M_{q}(n))$ is free as a module over $\I$, the subalgebra of invariants under the adjoint coaction of $\mathcal{O}(GL_{q}(n))$, for $q$ not a root of unity or $q=1$.  At $q=1$ this is a restatement of Kostant's Theorem for the general linear group.

Several proofs of Kostant's Theorem in the classical case have appeared over the last forty years.
Our proof in the quantum case is adapted from the argument in \cite{BL}, which is similar to an earlier proof appearing in \cite{W}.
In order to explain our approach, we briefly sketch their argument in the case of the general linear group.

Consider the filtration on $\mathcal{O}(\mathfrak{gl}_{n}(\mathbb{C}))$ given by $deg(x_{ij})=\delta_{ij}$, where $\{ x_{ij} \}$ are
the standard coordinates on $\mathfrak{gl}_{n}(\mathbb{C})$.  Now let $I$ be the subalgebra of $\mathcal{O}(\mathfrak{gl}_{n}(\mathbb{C}))$
consisting of $GL_{n}(\mathbb{C})$-invariant polynomials, with the induced filtration.  Then the fact that $\mathcal{O}(\mathfrak{gl}_{n}(\mathbb{C}))$
is free over $I$, follows from the fact that $gr(\mathcal{O}(\mathfrak{gl}_{n}(\mathbb{C}))$ is free over $grI$.  This, in turn, follows from the
standard fact that the algebra of polynomials is free as a module over the ring of symmetric polynomials.

While our proof is based on the same idea, the quantum setting presents new complications. Indeed, the above filtration cannot be
adapted to the quantum setting in a manner that is compatible with the algebra structure.  Therefore we have to use a more subtle approach,
whereby we use a succession of filtrations each of which slightly simplifies the relations.

More precisely, we first construct a filtration on $\A$ that is compatible with the algebra structure.  We then consider the associated
graded algebra $\A'=gr\A$ as a module over $\I'=gr\I$.  However, our filtration is weak in the sense that the freeness of $\A'$ over $\I'$
does not follow from standard facts.  Nevertheless, the algebra $\A'$ has slightly simpler relations than the original algebra.  This allows
us to define a ``stronger'' filtration on $\A'$ and again consider it's associated graded algebra $\A''$.

We continue in such a way until we
can reduce to the standard fact mentioned above.  This argument relies on a theorem of Domokos and Lenagan (\cite{DL}) which gives an explicit
presentation of $\I$.  It is conjectured that the result of Domokos and Lenagan extends to arbitrary $q$, in which case our result will extend as well.

Our result implies the analogous statement in the setting of reflection equation algebras (also known as ``braided matrices'').  The reflection equation algebra $\mathcal{S}$ is another quantization of the coordinate ring of $n \times n$ matrices, which is also endowed with an adjoint coaction of the quantum general linear group.  In contrast to $\mathcal{A}$, the reflection equation algebra $\mathcal{S}$ is a comodule-algebra, and moreover its invariants with respect to the adjoint coaction are central.  We prove that $\mathcal{S}$ is free as a left $\mathcal{J}$-module, where $\mathcal{J}$ is the algebra of invariants.

As a corollary of our main result, we obtain a (non-canonical) equivariant decomposition of $\A$ as a tensor product of $\I$ and a $\mathcal{G}$-comodule $\mathcal{H}$.   We also obtain the analogous result for $\mathcal{S}$; the benefit of this formulation is that the $\mathcal{G}$-comodule corresponding to $\mathcal{H}$ is now an algebra.  This algebra can be regarded as
a quantization of the algebra of functions on the nilpotent cone (see \cite{D}).  It would be interesting to make these decompositions
canonical by defining a quantum analogue of the harmonic polynomials.

In the literature there are other quantum analogues of Kostant's
Theorem.  In \cite{JL} it is proven that the locally finite part of
the quantum enveloping algebra $U_{q}(\mathfrak{g})$ of a semisimple
Lie algebra $\mathfrak{g}$ is free over its center.  Another
analogue appears in \cite{B}, where it is shown that the algebra
$\mathcal{O}_{q}(G)$ is free over its invariants with respect to its
adjoint coaction for simple simply connected $G$ for generic $q$.
From our result one can deduce this theorem for the general linear
group.  However, it seems difficult to show the reverse implication.
%
%
\subsection*{Acknowledgements}
We are grateful to Matyas Domokos for helpful comments on a preliminary version of this paper, and in particular for pointing out to us the connection with reflection equation algebras.  We also thank Joseph Bernstein, Anthony Joseph, and Nolan Wallach for helpful conversations.

%
%

\section{Preliminaries}
\subsection{Filtered algebras}
We begin by recording some standard notations regarding filtrations.
Let $(V,F)$ be a linear space $V$ with a filtration $F$.
All filtrations we consider in this paper will be ascending. For $x
\in V$ we denote by $deg_{F}(x)=min\{d:x \in F^{d}V \}$.  The symbol
map $\sigma_{F}^{d}:F^{d}V \rightarrow gr_{F}^{d}V$ maps an element
$x$ to $x + F^{d-1}V$.  For any $x \in V$ we let
$\sigma_{F}(x)=\sigma_{F}^{deg_{F}(x)}(x)$.
\begin{lemma}
\label{symbprod} Let $(A,F)$ be a filtered algebra and let $x,y \in
A$.  If $deg_{F}(xy)=deg_{F}(x)+deg_{F}(y)$ then
$\sigma_{F}(xy)=\sigma_{F}(x)\sigma_{F}(y)$.
\end{lemma}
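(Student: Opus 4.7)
The plan is to unwind the definitions directly: almost nothing is needed beyond the construction of the multiplication on the associated graded algebra. Set $d=\deg_F(x)$ and $e=\deg_F(y)$, so that $x\in F^d V\setminus F^{d-1}V$ and $y\in F^e V\setminus F^{e-1}V$. By the hypothesis $\deg_F(xy)=d+e$, so $xy\in F^{d+e}V\setminus F^{d+e-1}V$, and in particular $\sigma_F(xy)=xy+F^{d+e-1}V\in gr_F^{d+e}A$.

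On the other hand, the multiplication on $gr_F A$ is by definition the map $gr_F^d A\otimes gr_F^e A\to gr_F^{d+e}A$ induced by the algebra multiplication of $A$: given representatives $x\in F^d V$ and $y\in F^e V$, the product of their classes is the class of $xy$ in $F^{d+e}V/F^{d+e-1}V$. Applying this with our specific $x$ and $y$ yields
\[
\sigma_F(x)\sigma_F(y)=(x+F^{d-1}V)(y+F^{e-1}V)=xy+F^{d+e-1}V,
\]
which coincides with $\sigma_F(xy)$ computed above.

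There is essentially no obstacle here; the only thing that the hypothesis is used for is to rule out the degenerate case $\deg_F(xy)<d+e$, in which $xy+F^{d+e-1}V$ would vanish in $gr_F^{d+e}A$ while $\sigma_F(xy)$ would live in a lower graded piece, making the identification $\sigma_F(xy)=\sigma_F(x)\sigma_F(y)$ meaningless as stated. So the proof is really a one-line verification once the definition of the graded multiplication is made explicit, and the whole point of the lemma is to flag the non-degeneracy condition under which symbols are multiplicative; it will be invoked repeatedly in the sequel when passing from a filtered computation to its associated graded shadow.
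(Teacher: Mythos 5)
Your argument is correct and is the standard one: once the multiplication on $gr_F A$ is written out on representatives, the hypothesis $\deg_F(xy)=\deg_F(x)+\deg_F(y)$ is exactly what guarantees that $xy+F^{d+e-1}A$ is the symbol of $xy$ rather than zero in $gr_F^{d+e}A$. The paper states this lemma without proof, treating it as routine, so there is no alternative argument to compare against; your verification is precisely what was left implicit.
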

\begin{lemma}[Lemma 4.2, \cite{BL}]
\label{BL} Let $(M,F)$ be a filtered module over a filtered algebra
$(A,F)$ and $\{m_{k}\}$ a family of elements of $M$. Suppose that
the symbols $\sigma_{F}(m_{k})$ form a free basis of the
$gr_{F}(A)$-module $gr_{F}(M)$. Then $\{m_{k}\}$ is a free basis of
the $A$-module $M$.
\end{lemma}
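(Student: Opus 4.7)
The plan is to verify separately that $\{m_k\}$ spans $M$ over $A$ and that it is linearly independent over $A$. For both parts, the tool is passage to symbols combined with Lemma \ref{symbprod}, which controls the symbol of a product whenever no degree is lost.

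For spanning, I would argue by induction on the filtration degree. Given $m \in M$ with $deg_F(m) = d$, the hypothesis lets me expand $\sigma_F(m) = \sum_k \bar a_k\,\sigma_F(m_k)$ with $\bar a_k \in gr_F(A)$ homogeneous of degree $d - deg_F(m_k)$. Lifting each $\bar a_k$ to some $a_k \in A$ with $\sigma_F(a_k) = \bar a_k$ and $deg_F(a_k) + deg_F(m_k) = d$, the element $m - \sum_k a_k m_k$ then lies in $F^{d-1}M$, strictly lower in the filtration. Iterating this reduction and using that the filtration in every intended application is bounded below, the process terminates and expresses $m$ as an $A$-linear combination of the $m_k$.

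For linear independence, suppose a nontrivial relation $\sum_k a_k m_k = 0$ holds. Set $d = \max\{deg_F(a_k) + deg_F(m_k) : a_k \neq 0\}$ and let $K$ index the terms attaining this maximum. Reducing modulo $F^{d-1}M$ and invoking Lemma \ref{symbprod} yields $\sum_{k \in K} \sigma_F(a_k)\,\sigma_F(m_k) = 0$ in $gr_F^d(M)$. Freeness of $\{\sigma_F(m_k)\}$ over $gr_F(A)$ then forces $\sigma_F(a_k) = 0$ for each $k \in K$, contradicting the definition of $K$. The only real subtlety in the whole argument is ensuring that the descending induction in the spanning step terminates; this relies on the implicit hypothesis that the filtration is bounded below, which holds in every application of this lemma in the sequel. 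Once that is granted, the remainder is a formal manipulation of symbols.
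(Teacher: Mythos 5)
The paper does not supply its own proof of this lemma; it cites Lemma 4.2 of \cite{BL}, so there is no in-paper argument to compare against. Your argument is the standard one and is essentially correct, with two small remarks. First, in the linear-independence step you invoke Lemma~\ref{symbprod}, but that lemma only applies when $deg_{F}(a_{k}m_{k})=deg_{F}(a_{k})+deg_{F}(m_{k})$, which you have no reason to assume; what you actually need is the tautology that for $a_{k}\in F^{d_{1}}A$ and $m_{k}\in F^{d_{2}}M$ the class of $a_{k}m_{k}$ in $gr_{F}^{d_{1}+d_{2}}M$ is by definition $\sigma_{F}(a_{k})\cdot\sigma_{F}(m_{k})$, whether or not degree drops (if it drops, both sides are zero). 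The same remark applies in your spanning step, where you implicitly need the reduction $m-\sum_{k}a_{k}m_{k}\in F^{d-1}M$ even when some $deg_{F}(a_{k}m_{k})<d$. Second, you are right that the descending induction requires the filtration to be bounded below (more precisely, separated and exhaustive); the lemma as stated leaves this implicit, but it is satisfied throughout the paper since all filtrations used are $\mathbb{Z}_{\geq 0}$-indexed with $F^{-1}=0$.
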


\begin{lemma}
\label{lemmaCG}
Let $A=\bigoplus_{d \geq 0}A_{d}$ be a unital graded associative algebra, and let $I=\bigoplus_{d \geq 0}I_{d} \subset A$ be a unital graded subalgebra.  Suppose $I_{0}=A_{0}=span\{1\}$, and that $A$ is a free left $I$-module.  Define $I_{+}= \bigoplus_{d > 0}I_{d}$ and let $H \subset A$ be a graded linear complement to $A I_{+}$:
$$
H \oplus A I_{+}=A.
$$
Then the multiplication map
$$
H \otimes I \rightarrow A
$$
is an isomorphism of left $I$-modules.
\end{lemma}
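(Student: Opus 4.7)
My plan is to show that the multiplication map $m \colon H \otimes I \to A$ is both surjective and injective, treating the two properties separately.

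\textbf{Surjectivity} follows formally by induction on degree. At degree zero, $(AI_+)_0 = 0$ since $I_+$ is concentrated in strictly positive degrees, so $H_0 = A_0 = \operatorname{span}\{1\}$ and $1 = m(1 \otimes 1)$. At positive degree $d$, any homogeneous $a \in A_d$ decomposes via $A = H \oplus AI_+$ as
\[
a = h + \sum_j a_j i_j,
\]
with $h \in H_d$, $i_j \in I_+$ homogeneous, and $a_j \in A$ homogeneous of degree $d - \deg(i_j) < d$. The inductive hypothesis places each $a_j$ in $\operatorname{Im}(m)$, hence so does $a$.

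\textbf{Injectivity} is the substantive step, and here the freeness hypothesis enters. My plan is to match Hilbert series in each graded degree. First I would fix a homogeneous free $I$-basis $\{b_\alpha\}$ of $A$, whose existence follows from graded Nakayama applied to the connected graded ring $I$. Setting $B = \operatorname{span}\{b_\alpha\}$, the freeness yields $\operatorname{Hilb}_A(t) = \operatorname{Hilb}_B(t) \cdot \operatorname{Hilb}_I(t)$. A structural computation using this basis then identifies $A/AI_+$ with $B$ as a graded vector space, giving $\operatorname{Hilb}_{A/AI_+}(t) = \operatorname{Hilb}_B(t)$. Since the decomposition $A = H \oplus AI_+$ gives a graded isomorphism $H \cong A/AI_+$, one obtains
\[
\operatorname{Hilb}_{H \otimes I}(t) = \operatorname{Hilb}_H(t) \cdot \operatorname{Hilb}_I(t) = \operatorname{Hilb}_B(t) \cdot \operatorname{Hilb}_I(t) = \operatorname{Hilb}_A(t).
\]
Because $m$ is a graded surjection between graded vector spaces with equal (degree-by-degree finite) Hilbert series, it must be an isomorphism.

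The \textbf{main obstacle} I foresee is the dimensional identification $\operatorname{Hilb}_{A/AI_+}(t) = \operatorname{Hilb}_B(t)$, which is where the freeness of $A$ over $I$ is invoked in a nontrivial way; in the non-commutative setting, this step requires care in controlling how $I_+$ interacts with the chosen homogeneous basis $\{b_\alpha\}$. Once this dimensional match is established, the rest of the argument is formal graded bookkeeping.
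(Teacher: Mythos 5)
Your surjectivity argument is fine, but the step you flag as the main obstacle --- identifying $A/AI_+$ with the span $B$ of a homogeneous free basis --- is exactly where the proof must actually be carried out, and its resolution hinges on a point of sidedness that the proposal leaves open. If $\{b_\alpha\}$ is a homogeneous free basis of $A$ as a \emph{right} $I$-module, $A=\bigoplus_\alpha b_\alpha I$, then since $I_+$ is a two-sided ideal of $I$ one gets $AI_+=\bigoplus_\alpha b_\alpha I_+$, hence $A/AI_+\cong\bigoplus_\alpha \overline{b_\alpha}\,(I/I_+)\cong B$, and your Hilbert-series comparison closes (granting the further, tacit, hypothesis that each $A_d$ is finite-dimensional, which does hold in the paper's application). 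But the lemma as printed assumes $A$ is free as a \emph{left} $I$-module, $A=\bigoplus_\alpha I b_\alpha$, and then $AI_+$ has no such decomposition; with that literal reading the statement is actually false. Take $A=\mathbb{C}\langle x,y\rangle/(yx)$ with $\deg x=\deg y=1$ and $I=\mathbb{C}[x]$: then $A$ is left-free over $I$ on $\{y^b\}_{b\ge 0}$, yet $AI_+=x\mathbb{C}[x]$, so $A/AI_+$ has Hilbert series $(1-t+t^2)/(1-t)^2\ne 1/(1-t)=\mathrm{Hilb}_B(t)$, and $H\otimes I\to A$ is surjective but not injective (e.g.\ $xy\otimes x\mapsto xyx=0$). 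So the hypothesis must be read as right-freeness to match the conclusion $H\otimes I\cong A$ (equivalently, replace $AI_+$, $H\otimes I$ by $I_+A$, $I\otimes H$); the paper's application is unaffected because $\mathcal{A}$ is free over $\mathcal{I}$ on both sides, as the remark after the main theorem records.

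Note also that the paper gives no proof of this lemma, only a citation to Theorem 6.3.3 of Chriss--Ginzburg, and the argument there avoids Hilbert series (and hence the finiteness assumption) altogether. One lifts a homogeneous basis of $A/AI_+$ to $\{h_\beta\}\subset H$, notes via your surjectivity step (graded Nakayama) that the $h_\beta$ generate $A$ over $I$, and forms the graded $I$-linear surjection $\bigoplus_\beta I(-\deg h_\beta)\twoheadrightarrow A$, $e_\beta\mapsto h_\beta$. Since $A$ is free, hence projective, this splits; the kernel $K$ is then a graded direct summand satisfying $K/KI_+=0$ because the map is already an isomorphism modulo $I_+$, so $K=0$ by graded Nakayama again. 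Your route is essentially this argument read off on Hilbert series, and it is sound once the sidedness is straightened out and finite-dimensionality of the graded pieces is added to the hypotheses.
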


For a proof of this lemma see the proof
of Theorem 6.3.3 in \cite{CG} (p. 319).

\subsection{Quantum groups}
We recall the definition of the quantum $n \times n$ matrices and
the quantum general linear group.  Fix $q \in \mathbb{C}^{\times}$ and let $\mathcal{O}(M_{q}(n))$ be the
bi-algebra of quantum $n \times n$ matrices, i.e. $\mathcal{O}(M_{q}(n))$
is the $\mathbb{C}$-algebra generated by indeterminates $ x_{ij} $
$( i,j=1,...,n )$ subject to the following relations:
\begin{eqnarray}
 x_{ij}x_{il} &=& qx_{il}x_{ij}       \\
 x_{ij}x_{kj} &=& qx_{kj}x_{ij}  \\
 x_{il}x_{kj} &=& x_{kj}x_{il}  \\
 x_{ij}x_{kl}-x_{kl}x_{ij} &=& (q-q^{-1})x_{il}x_{kj}
\end{eqnarray}
where $1 \leq i < k \leq n$ and $1 \leq j < l \leq n$.

We introduce a diagrammatic shorthand to work with these relations.
First consider the case $n=2$.  The relations defining
$\mathcal{O}(M_{q}(2))$ are encapsulated in the following diagram:
\begin{equation*}
\xymatrix{ {x_{11}} \ar[d]\ar@{~>}[dr]\ar[r]
& {x_{12}} \ar[d] \ar@{-}[ld] \\
{x_{21}} \ar[r] & {x_{22}} }
\end{equation*}
Here, if there is an undirected edge between $x_{ij}$ and $x_{kl}$ then
$[x_{ij},x_{kl}]=0$.  A directed edge from $x_{ij}$ to $x_{kl}$
means $x_{ij}x_{kl}= qx_{kl}x_{ij}$.  Finally, a curly directed edge
from $x_{ij}$ to $x_{kl}$ means they satisfy the ``complicated''
relation (4) above.

In general, the relations defining $\mathcal{O}(M_{q}(n))$ can be
expressed as: every $2 \times 2$ submatrix of
$\mathcal{O}(M_{q}(n))$ generates a copy of $\mathcal{O}(M_{q}(2))$.
For instance, if $n=3$ then the submatrix obtained by choosing the
first and third row and the second and third columns contributes the
following relations:
\begin{equation*}
\xymatrix{ {\bullet} & {\bullet} \ar@/_/[dd]\ar@{~>}[ddr]\ar[r]
& {\bullet} \ar@{-}[ddl] \ar@/^/[dd] \\
{\bullet} & {\bullet} & {\bullet} \\
{\bullet} & {\bullet} \ar[r] & {\bullet} }
\end{equation*}

\begin{theorem}[Theorem 3.5.1, \cite{PW}]
\label{PBW} $\mathcal{O}(M_{q}(n))$ has a PBW-type basis consisting
of monomials $\{ x^{a} : a \in M_{n}(\mathbb{N}) \}$, where
$x^{a}=x_{11}^{a_{11}}x_{12}^{a_{12}} \cdots x_{nn}^{a_{nn}}$.
\end{theorem}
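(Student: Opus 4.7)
My plan is to establish this PBW-type basis result via the Diamond Lemma of Bergman. First, I would fix the total order on the generators given by the lexicographic order on index pairs, $x_{11}<x_{12}<\cdots<x_{1n}<x_{21}<\cdots<x_{nn}$, and extend this to a degree-lexicographic term order on words in the $x_{ij}$. Each defining relation (1)--(4) is then read as a rewriting rule that replaces an out-of-order product $x_{kl}x_{ij}$ (with $(k,l)>(i,j)$) by a scalar multiple of the ordered product $x_{ij}x_{kl}$, plus, in the case of relation (4), a correction term proportional to $x_{il}x_{kj}$, which is itself already in ascending order.

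Second, I would prove spanning by induction on a well-founded measure on words (for instance the pair consisting of total degree and the number of inversions with respect to the chosen order), showing that each rewriting strictly decreases this measure and therefore iterated reduction expresses every monomial as a linear combination of PBW monomials $x^{a}$. The shape of the relations is exactly tailored to this: the leading term of each reduction has one fewer inversion, and the correction in (4) involves generators $x_{il}, x_{kj}$ whose index pairs sit strictly between $(i,j)$ and $(k,l)$ in the relevant sense, so the recursion does terminate.

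Third, and this is the heart of the argument, I would verify the Diamond Lemma's confluence condition: for every ambiguity arising as an overlap $x_{ij}x_{kl}x_{mn}$ in which two distinct adjacent pairs can be rewritten, the two resulting reductions must yield the same normal form. I would enumerate these by case analysis on the relative positions of $(i,j),(k,l),(m,n)$ in the $n\times n$ grid. Overlaps involving only relations (1)--(3) are the standard quantum affine space checks and reduce to simple scalar identities. The main obstacle, as expected, is any overlap that activates relation (4): the correction term produced by reducing one pair of adjacent generators must, after further reductions in the third position, agree with the correction produced by reducing the other pair first. Verifying this requires carefully tracking how the $(q-q^{-1})$ corrections from different instances of (4) combine with applications of (1)--(3) on the remaining factor, and is essentially the same computation that establishes the $R$-matrix form of the relations is consistent with the Yang--Baxter equation.

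As an alternative that trades combinatorics for linear algebra, one could construct a faithful representation of $\mathcal{O}(M_q(n))$ directly, for example by letting it act on $\mathbb{C}[y_{ij}]$ via $q$-shift and multiplication operators, or by realising it as a subalgebra of endomorphisms of a tensor power of the quantum plane, and then show that the operators corresponding to distinct ordered monomials $\{x^{a}\}$ are linearly independent; this is essentially the path taken in \cite{PW} and replaces the confluence check with an explicit faithful module construction. Either way, the substantive content is localized in the interaction between the ``complicated'' relation (4) and the simpler quasi-commutation relations (1)--(3).
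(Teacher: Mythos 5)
The paper does not prove this statement --- it is cited verbatim from Parshall--Wang [PW, Theorem 3.5.1] --- so there is no internal proof to compare against, and supplying a self-contained argument is reasonable. Your choice of Bergman's Diamond Lemma is the standard route, and the confluence discussion (and the alternative via a faithful module) is sound in outline. However, your termination argument has a concrete gap: the proposed measure, total degree followed by number of inversions, does \emph{not} strictly decrease under the correction term of relation (4). Take $n=3$ and the word $x_{32}\,x_{33}\,x_{11}\,x_{12}$. Reducing the out-of-order adjacent pair $x_{33}x_{11}$ via relation (4) yields the correction monomial $x_{32}\,x_{13}\,x_{31}\,x_{12}$; the inversion count in lexicographic order on index pairs rises from $4$ to $5$. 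The heuristic that $(i,l)$ and $(k,j)$ ``sit strictly between'' $(i,j)$ and $(k,l)$ is true but does not control inversions against the \emph{other} letters of the word: a letter $u$ to the left with $(k,j)<u\le(k,l)$, or a letter $v$ to the right with $(i,j)\le v<(i,l)$, each gains one new inversion with the correction pair, which can exceed the single inversion removed from the reduced pair. (The same subtlety affects the descent-statistic argument in the proof of Lemma \ref{St}(3) in this paper.)

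The gap is fixable by refining the measure: compare first the number of inversions in the \emph{row-index} sequence, then the full inversion count, lexicographically. Relations (2), (3), and both terms of (4) replace $x_{kl}x_{ij}$ (with $i<k$) by a word whose row indices at those positions are $i,k$ in order, so row inversions strictly decrease; relation (1) keeps rows fixed and strictly decreases total inversions. With this replacement your spanning argument terminates, and the rest of your sketch --- the case analysis of overlap ambiguities, with the Yang--Baxter identity handling overlaps that invoke (4) --- carries through.
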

Fix $n \in \mathbb{N}$ and let $\mathcal{A}=\mathcal{O}(M_{q}(n))$.
$\mathcal{A}$ has a standard grade defined by setting
$deg(x_{ij})=1$ for all $i$ and $j$.  The quantum determinant is a
central element of $\mathcal{A}$ given by
\begin{equation*}
det_{q}=\sum_{w \in S_{n}}(-q)^{l(w)}x_{{1}w({1})} \cdots x_{nw(n)}.
\end{equation*}
By adjoining the inverse of $det_{q}$ we obtain the quantum general
linear group
\begin{equation*}
\HH=\mathcal{O}(GL_{q}(n))=\mathcal{A}[det_{q}^{-1}].
\end{equation*}
$\HH$ is a Hopf algebra, and we denote the antipode of this algebra
by S. We will denote the element $x_{ij}$ by $u_{ij}$ when we are considering it as an element
of  $\HH$.
There is an adjoint coaction of $\HH$ on
$\mathcal{A}$, which, following \cite{DL}, we write as a right
coaction:
\begin{equation*}
\alpha_{q}:\mathcal{A} \rightarrow \mathcal{A} \otimes \HH.
\end{equation*}
given by
$$
\alpha_{q}(x_{ij})=\sum_{m,s=1}^{N}x_{ms}\otimes u_{sj}S(u_{im}).
$$
There is a variant of the adjoint coaction, denoted $\beta_{q}:\mathcal{A} \rightarrow \mathcal{A} \otimes \HH$, given by the formula
$$
\beta_{q}(x_{ij})=\sum_{m,s=1}^{N}x_{ms}\otimes S(u_{sj})u_{im}.
$$

\subsection{Invariants of the adjoint coaction}
An invariant of the coaction $\alpha_{q}$ is by definition an
element $b \in \mathcal{A}$ such that $\alpha_{q}(b)=b \otimes 1$.  (In \cite{DL} these are referred to as ``coinvariants''.) Let
$\I$ denote the set of invariants of $\A$ with respect to the coaction $\alpha_q$.  We let $\I'$ denote the set of invariants of $\A$ with respect to the coaction $\beta_{q}$. Notice that in the classical
($q=1$) case, the set $\I$ agrees with the usual invariants of the
action of $GL(n)$ on the coordinate ring of its Lie algebra,
$\mathcal{O}(M(n))$.

In \cite{DL}, Domokos and Lenagan explicitly determine $\I$.  Let us describe their result: for $1 \leq d \leq n$ and a subset $I=\{i_{1}< \cdots < i_{d}\} \subset \{1,...,n\},$ let
$det_{q,I}$ be the principal minor corresponding to $I$:
\begin{equation*}
det_{q,I}=\sum_{w \in S_{I}}(-q)^{l(w)}x_{i_{1}w(i_{1})} \cdots
x_{i_{d}w(i_{d})}
\end{equation*}
and set
\begin{equation*}
\Delta_{d}=\sum_{|I|=d}det_{q,I}.
\end{equation*}
Similarly set
\begin{equation*}
\Delta_{d}'=\sum_{|I|=d}q^{-2(\sum_{i\in I}i)}det_{q,I}.
\end{equation*}
It's not hard to see that $\Delta_{d} \in \I$ and $\Delta_{d}' \in \I'$ for every $d$
(\cite{DL}, Propositions 4.1 and 7.2).
\begin{theorem}(\cite{DL}, Corollary 6.2 and Theorem 7.3)
\label{dlthm} For $q\in \mathbb{C}^{\times}$ not a root of unity or $q=1$, $\I$ is a commutative polynomial algebra on the
$\Delta_{d}$, and similarly $\I'$ is a commutative polynomial algebra on the
$\Delta_{d}'$.
\end{theorem}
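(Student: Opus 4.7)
I sketch the argument for $\I$; the argument for $\I'$ is parallel, the weights $q^{-2\sum_{i\in I}i}$ in $\Delta_d'$ being precisely what intertwines $\alpha_q$-invariance with $\beta_q$-invariance. The proof splits into three parts: commutativity of the subalgebra generated by the $\Delta_d$, algebraic independence of $\Delta_1,\ldots,\Delta_n$, and that these generate $\I$.

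\emph{Commutativity.} I would show $[\Delta_d,\Delta_e]=0$ by expanding both products in the PBW basis of Theorem \ref{PBW} and systematically applying the braiding relations~(1)--(4). The contributions from the ``curly'' relation~(4) cancel in pairs once one sums over all principal $d$- and $e$-subsets, reflecting the classical fact that symmetric functions of a matrix commute. An alternative route is to realize each $\Delta_d$ as a quantum trace of a quantum exterior power of the generic matrix and invoke standard FRT arguments that such traces pairwise commute.

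\emph{Algebraic independence.} Each $\Delta_d$ is homogeneous of degree $d$ in the standard grading on $\A$. A nontrivial polynomial relation among the $\Delta_d$ would, after extracting a top-degree homogeneous component and specializing via the flat $\mathbb{C}[q,q^{-1}]$-form of $\A$ to $q=1$, yield a nontrivial relation among the elementary symmetric functions in the eigenvalues of a generic classical matrix, which is absurd.

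\emph{Generation.} This is the main step. It suffices to show that the Hilbert series of $\I$ with respect to the standard grading equals $\prod_{d=1}^n(1-t^d)^{-1}$, since by the first two steps this is already the Hilbert series of $\mathbb{C}[\Delta_1,\ldots,\Delta_n]\subseteq \I$. I would decompose $\A$ as a right $\HH$-comodule under the adjoint coaction using quantum Schur--Weyl duality. The PBW basis shows $\dim \A_N$ is independent of $q$, and for $q$ not a root of unity the Hecke algebra $H_N(q)$ is semisimple, so the isotypic decomposition of $\A_N$ under the adjoint coaction has the same $\HH$-multiplicities as in the classical $q=1$ case. In particular $\dim \I_N$ equals the classical dimension, namely the number of partitions of $N$ with at most $n$ parts, so the two Hilbert series agree. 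The main obstacle is precisely this step: one has to recast the adjoint coaction in a form to which Schur--Weyl duality applies (writing $\A$ in terms of tensor products $V\otimes V^*$ and tracking how invariants sit inside the Hecke-algebra-governed decomposition), and carefully transport the classical multiplicity count across the flat deformation while $q$ is generic.
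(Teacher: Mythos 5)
This theorem is not proved in the paper under review --- it is imported verbatim from Domokos--Lenagan \cite{DL} (their Corollary 6.2 and Theorem 7.3), and the present paper uses it as a black box. There is thus no in-paper argument to compare against; the relevant comparison is with \cite{DL} itself. Your three-part plan (commutativity of the $\Delta_d$; algebraic independence via specialization at $q=1$; generation via a Hilbert-series count at generic $q$) matches the broad outline of what \cite{DL} do, but two of the three steps have real gaps.

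The commutativity step is not actually established. Since $\alpha_q$ does not make $\A$ a comodule-algebra, it is a genuine theorem --- not a formal consequence of $\HH$-equivariance --- that $\I$ is commutative, and the claim that the curly-relation contributions to $[\Delta_d,\Delta_e]$ ``cancel in pairs once one sums over all principal subsets'' is an assertion, not an argument; there is no visible pairing, and for $n\ge 3$ the mixed terms from relation (4) are not symmetric in any obvious way. The cleaner route, and the one the present paper gestures at, is to pass to the reflection equation algebra: $\Phi$ is multiplicative on $\I'$, and in $\mathcal{S}$ the invariants are automatically central because $\mathcal{S}$ \emph{is} a comodule-algebra. Your alternative (quantum traces of $q$-exterior powers) is really this same reflection-equation picture in disguise and would need to be set up explicitly, not merely invoked.

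Your generation step is morally correct, and you rightly flag the recasting of the adjoint coaction as the crux, but the way you invoke Hecke-algebra semisimplicity skips the substantive point: $H_N(q)$-semisimplicity governs $V^{\otimes N}$, not $\A_N$ with the adjoint coaction. What actually carries the weight is the Peter--Weyl type decomposition of $\A_N$ as $\bigoplus_{\lambda} V_\lambda^*\otimes V_\lambda$, summed over partitions $\lambda$ of $N$ with at most $n$ parts, as an $\HH$-bicomodule (valid for $q$ not a root of unity). Restricting to the conjugation coaction, each summand contributes a one-dimensional space of coinvariants by Schur's lemma for comodules over a cosemisimple Hopf algebra, so $\dim\I_N$ equals the number of such partitions, which is the coefficient of $t^N$ in $\prod_{d=1}^n(1-t^d)^{-1}$. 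You can derive that bicomodule decomposition from quantum Schur--Weyl duality plus $H_N(q)$-semisimplicity, but that derivation is precisely the nontrivial step your sketch elides.
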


\section{Main results}
\subsection{Main Theorem}
We consider $\mathcal{A}$ as a left $\I$-module.  Our main result is
the following quantum analogue of Kostant's classical theorem:
\begin{theorem}
\label{mainthm}For $q\in \mathbb{C}^{\times}$ not a root of unity or $q=1$, $\mathcal{A}$ is a free graded left $\I$-module.
\end{theorem}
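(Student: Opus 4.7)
The plan is to adapt the classical Brylinski--Lenagan argument sketched in the introduction by building a tower of filtered algebras $\mathcal{A} = \mathcal{A}^{(0)} \to \mathcal{A}^{(1)} \to \mathcal{A}^{(2)} \to \cdots$, where each $\mathcal{A}^{(s+1)}$ is the associated graded of $\mathcal{A}^{(s)}$ with respect to a carefully chosen filtration $F_{s+1}$ that is compatible with the algebra structure and strictly simplifies the defining relations. Iterating Lemma \ref{BL} should then propagate freeness of the terminal algebra over the terminal invariants back to the statement that $\mathcal{A}$ is free over $\mathcal{I}$.

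For the first filtration $F_1$ I would assign weights $a_{ij}$ to the generators so that for all $i<k$, $j<l$ one has $a_{ij}+a_{kl} > a_{il}+a_{kj}$ strictly (for instance, the choice $a_{ij}=ij$ works, since $ij+kl-il-kj=(i-k)(j-l)>0$). Relations (1), (2), (3) are preserved, while the quartic relation (4) degenerates to $[x_{ij},x_{kl}]=0$ in $\mathcal{A}^{(1)}=gr_{F_1}\mathcal{A}$, because the correction term $(q-q^{-1})x_{il}x_{kj}$ lies in strictly higher $F_1$-degree. By Theorem \ref{PBW}, $\mathcal{A}^{(1)}$ still has the same PBW monomial basis. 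For the subsequent stages, I would design filtrations that in turn trivialize the $q$-commutation relations (1) and (2) -- for instance by giving extra weight to off-diagonal entries, so that each $q$-commutator becomes an honest commutator in the next associated graded. After finitely many such steps the terminal algebra becomes the commutative polynomial ring in the $x_{ij}$.

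Throughout the tower I would track the Domokos--Lenagan generators $\Delta_d$ of Theorem \ref{dlthm}, calibrating each filtration so that every $\Delta_d$ remains homogeneous with top term the sum of diagonal monomials $\sum_{|I|=d}\prod_{i\in I} x_{ii}$. At the terminal stage the invariants then become the elementary symmetric polynomials $e_d(x_{11},\ldots,x_{nn})$ inside $\mathbb{C}[x_{ij}]$, and the required freeness follows from the classical fact that $\mathbb{C}[x_{11},\ldots,x_{nn}]$ is free over its subalgebra of symmetric polynomials, tensored with the free polynomial ring in the off-diagonal variables. Lemma \ref{BL} applied inductively up the tower then yields the graded freeness of $\mathcal{A}$ over $\mathcal{I}$.

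The main obstacle will be the middle portion of this scheme: designing the succession of filtrations so that at every stage they are multiplicative, strictly simplify the remaining relations, and preserve the diagonal leading term of each $\Delta_d$. Verifying in particular that the symbols $\sigma(\Delta_d)$ remain algebraically independent after each pass, so that the terminal invariant subalgebra is exactly $\mathbb{C}[e_1,\ldots,e_n]$ and not something smaller, is the delicate bookkeeping on which the argument hinges -- and it is precisely why the proof must depend on the explicit structure of $\mathcal{I}$ provided by Theorem \ref{dlthm} rather than on any abstract property of the invariants.
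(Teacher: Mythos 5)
Your first filtration $a_{ij}=ij$ does indeed kill all the curly relations (4) in one pass and is an algebra filtration, but it has a fatal side effect that you later acknowledge is a constraint without noticing the contradiction: since $a_{ii}=i^2$, the diagonal generators have \emph{distinct} weights, so $\Delta_d = \sum_{|I|=d}\det_{q,I}$ is not $F_1$-homogeneous, and its symbol is \emph{not} the elementary symmetric polynomial. A short computation with the rearrangement inequality gives $\sigma_{F_1}(\Delta_d)=x_{(n-d+1)(n-d+1)}\cdots x_{nn}$, a product of consecutive diagonal entries. Already for $n=2$ one sees that $gr_{F_1}\mathcal{I}=\mathbb{C}[x_{22},\,x_{11}x_{22}]$ is a proper subalgebra of $\mathbb{C}[x_{11},x_{22}]$, and the ambient polynomial algebra is \emph{not} a free module over it (the relation $(x_{11}x_{22})\cdot 1 - (x_{22})\cdot x_{11}=0$ shows $1$ and $x_{11}$ are $gr\mathcal{I}$-dependent). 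Hence the hypothesis of Lemma \ref{BL} is violated at the very first stage, and the induction you describe cannot start.

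Your stated goal of ``calibrating each filtration so that every $\Delta_d$ remains homogeneous with top term $\sum_{|I|=d}\prod_{i\in I}x_{ii}$'' is the right idea, but it forces all diagonal entries to carry the \emph{same} weight, which is precisely what $a_{ij}=ij$ fails to do. Once you insist on constant diagonal weight, a single filtration no longer obviously kills all curly relations simultaneously, which is why the paper instead uses a chain of $\{0,1\}$-valued weightings $w_t$ (weight $1$ on entries with $|i-j|<n-t$), each compatible with the previous associated graded and each peeling off one ``band'' of curly relations. This constant-diagonal constraint is also what lets the paper prove (Proposition 4.3.3) that the symbols of $\mathcal{I}$ stay generated by the $\Delta_d^{(t)}$, which is the substantive step you gloss over. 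Finally, the additional passes you propose to kill relations (1)--(3) are unnecessary: once the curly relations are gone, the $q$-commutation relations act by scalars in the PBW basis and do not obstruct freeness over the diagonal symmetric polynomials.
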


\begin{remark}
The condition on $q$ in the hypothesis of the theorem is needed only for the application of Theorem \ref{dlthm}.  It is conjectured that Theorem \ref{dlthm} holds for arbitrary $q$.
\end{remark}

\begin{remark}
The same result and proof hold for $\A$ regarded as a right $\I$-module, and, moreover, for $\A$ regarded as a left and right $\I'$-module.
\end{remark}

Before beginning the proof of this theorem we record a corollary.
Let $\I_{+}$
be the augmentation ideal of $\I$, i.e. $\I_{+}$ equals the elements
in $\I=\mathbb{C}[\Delta_{1},...,\Delta_{n}]$ with zero constant term.
Define $\I^{\mathcal{A}}$ to be the left ideal of $\mathcal{A}$
generated by $\I_{+}$.  By (\cite{DL}, Lemma 2.2) for $x \in
\mathcal{A}$ and $y \in \I$, $\alpha_{q}(xy)=\alpha_{q}(x)\alpha_{q}(y)$.
Therefore $\I^{\mathcal{A}}$ is an $\HH$-invariant graded subspace
of $\mathcal{A}$.

Set $\mathcal{H}=\A/\I^{\A}$.
Since $q$ is not a root of unity, the
representation theory of $\HH$ is semisimple (see e.g. [KS]) and so we can (non-canonically) identify
$\mathcal{A}$ with $\mathcal{H} \oplus \I^{\mathcal{A}}$ as graded $\HH$-comodules.
Now, by Theorem \ref{mainthm} and Lemma \ref{lemmaCG} we conclude:
\begin{corollary}
\label{maincor}
For $q\in \mathbb{C}^{\times}$ not a root of unity or $q=1$, the multiplication map in $\mathcal{A}$ gives an $\HH$-equivariant
isomorphism of graded vector spaces
\begin{equation*}
\mathcal{H} \otimes \I  \cong \mathcal{A}.
\end{equation*}
\end{corollary}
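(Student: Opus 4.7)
The plan is to derive the corollary almost mechanically from Theorem \ref{mainthm} and Lemma \ref{lemmaCG}, with the only genuine work being the verification of $\HH$-equivariance. I would begin by checking that the hypotheses of Lemma \ref{lemmaCG} hold for $A=\A$ and $I=\I$: the algebra $\A$ is $\mathbb{N}$-graded with $\A_0=\mathbb{C}\cdot 1$ (by the PBW basis of Theorem \ref{PBW}), and by Theorem \ref{dlthm} the subalgebra $\I=\mathbb{C}[\Delta_1,\ldots,\Delta_n]$ is graded with $\I_0=\mathbb{C}\cdot 1$, since each generator $\Delta_d$ has positive degree. Freeness of $\A$ as a graded left $\I$-module is exactly Theorem \ref{mainthm}. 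Since $\I^{\A}=\A\I_+$ by definition, $\mathcal{H}$ is by construction a graded linear complement to $\A\I_+$, so Lemma \ref{lemmaCG} immediately supplies an isomorphism of graded left $\I$-modules
\[
m:\mathcal{H}\otimes \I\;\xrightarrow{\sim}\;\A,\qquad m(h\otimes y)=hy.
\]

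Next I would justify the $\HH$-equivariant choice of $\mathcal{H}$. The key ingredient, already cited in the excerpt, is Lemma 2.2 of \cite{DL}, which gives $\alpha_q(xy)=\alpha_q(x)\alpha_q(y)$ whenever $y\in\I$; this shows that $\I^{\A}=\A\I_+$ is an $\HH$-subcomodule of $\A$. Because $q$ is not a root of unity (or $q=1$), the comodule category of $\HH$ is semisimple and respects the grading, so a graded $\HH$-subcomodule complement $\mathcal{H}$ to $\I^{\A}$ in $\A$ exists, and this is the $\mathcal{H}$ we use.

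Finally, I would check $\HH$-equivariance of $m$. Endow $\mathcal{H}\otimes\I$ with the coaction that is $\alpha_q$ on the first factor and trivial on the second (legitimate because $\I$ consists of $\alpha_q$-invariants). For $h\in\mathcal{H}$ and $y\in\I$, writing $\alpha_q(h)=\sum h_{(0)}\otimes h_{(1)}$, the same lemma from \cite{DL} yields
\[
\alpha_q(hy)=\alpha_q(h)\alpha_q(y)=\alpha_q(h)(y\otimes 1)=\sum h_{(0)}y\otimes h_{(1)},
\]
which is precisely $(m\otimes\mathrm{id})$ applied to the coaction on $h\otimes y$. Thus $m$ intertwines the two coactions.

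There is no real obstacle in this step: the hard analytic content sits in Theorem \ref{mainthm}, and Lemma \ref{lemmaCG} is designed exactly for this passage from freeness to a multiplicative decomposition. The only subtle point is that $\A$ is not an $\HH$-comodule algebra for $\alpha_q$, so one cannot use multiplicativity of the coaction in general; this is precisely what makes the partial multiplicativity from \cite[Lemma 2.2]{DL} indispensable for ensuring both that $\I^{\A}$ is a subcomodule and that $m$ is equivariant.
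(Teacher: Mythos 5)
Your proposal is correct and follows essentially the same route as the paper: verify the hypotheses of Lemma~\ref{lemmaCG} using Theorem~\ref{mainthm} and the gradings from Theorems~\ref{PBW} and~\ref{dlthm}, use \cite[Lemma 2.2]{DL} together with semisimplicity of the $\HH$-comodule category to realize $\mathcal{H}$ as a graded subcomodule complement to $\I^{\A}=\A\I_+$, and then apply Lemma~\ref{lemmaCG}. The only difference is that you spell out the equivariance of the multiplication map explicitly, which the paper leaves implicit in the surrounding discussion.
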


\subsection{Reflection Equation Algebras}

In this section we show that our main theorem has an analogue in the setting of reflection equation algebras.

The reflection equation algebra, denoted $\mathcal{S}$, is another quantization of the coordinate ring of $n \times n$ matrices due to Majid.  For a precise definition of $\mathcal{S}$ see \cite{D} and references therein.

For us, the most important properties of $\mathcal{S}$ are the following: $\mathcal{S}$ has an adjoint coaction of $\HH$, $\mathcal{S}$ is a comodule-algebra with respect to this action, and there exists a graded $\HH$-comodule isomorphism
$$
\Phi:\A \rightarrow \mathcal{S}
$$
intertwining the $\beta$-coaction on $\A$ with the adjoint coaction on $\mathcal{S}$.  The map $\Phi$ is not an algebra homomorphism.  Nevertheless, it does satisfy the property
\begin{equation}
\Phi(ab)=\Phi(a)\Phi(b)
\end{equation}
for $a \in \I'$ and $b \in \A$ (see the proof of Lemma 3.2 in
\cite{D}).

Let $\mathcal{J} \subset \mathcal{S}$ be the subalgebra of
invariants with respect to the adjoint coaction of $\HH$.  Since
$\Phi$ is a comodule isomorphism, $\mathcal{J}=\Phi(\I')$.  Since
$\mathcal{S}$ is a comodule-algebra, $\mathcal{J}$ is central.  Now
Theorem \ref{mainthm} implies the following.
\begin{theorem}
The algebra $\mathcal{S}$ is free as a $\mathcal{J}$-module.
\end{theorem}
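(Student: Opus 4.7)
The plan is to transport a free basis from $\mathcal{A}$ to $\mathcal{S}$ via the map $\Phi$, exploiting the identity $\Phi(ab)=\Phi(a)\Phi(b)$ valid for $a\in\I'$ and $b\in\A$. By the second remark following Theorem \ref{mainthm}, $\mathcal{A}$ is free as a left $\I'$-module; fix a homogeneous free basis $\{b_{k}\}\subset\mathcal{A}$ and set $s_{k}=\Phi(b_{k})\in\mathcal{S}$. The claim is then that $\{s_{k}\}$ is a free basis of $\mathcal{S}$ as a left $\mathcal{J}$-module, which will suffice since $\mathcal{J}$ is central in $\mathcal{S}$ and so left/right module structures coincide.

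For spanning, given $s\in\mathcal{S}$ I would write $s=\Phi(x)$ for the unique $x\in\A$ with this property and decompose $x=\sum_{k} a_{k}b_{k}$ with $a_{k}\in\I'$. Applying $\Phi$ and invoking the partial multiplicativity yields
$$
s \;=\; \Phi\Bigl(\sum_{k} a_{k}b_{k}\Bigr) \;=\; \sum_{k}\Phi(a_{k})\,\Phi(b_{k}) \;=\; \sum_{k}\Phi(a_{k})\,s_{k},
$$
and each coefficient $\Phi(a_{k})$ lies in $\mathcal{J}=\Phi(\I')$ by the identification recorded just before the theorem. For $\mathcal{J}$-linear independence, suppose $\sum_{k} j_{k}s_{k}=0$ with $j_{k}\in\mathcal{J}$, and write $j_{k}=\Phi(a_{k})$ for unique $a_{k}\in\I'$. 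The same computation run in reverse gives $\Phi\bigl(\sum_{k} a_{k}b_{k}\bigr)=0$; injectivity of $\Phi$ forces $\sum_{k} a_{k}b_{k}=0$ in $\A$, and then the freeness of $\{b_{k}\}$ over $\I'$ forces each $a_{k}=0$, hence each $j_{k}=0$.

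There is no serious obstacle here beyond the bookkeeping: all of the nontrivial content has been absorbed into Theorem \ref{mainthm} and into the two structural properties of $\Phi$ recorded above, namely that $\Phi$ is a graded $\HH$-comodule isomorphism intertwining $\beta_{q}$ with the adjoint coaction on $\mathcal{S}$, and that $\Phi$ is multiplicative on $\I'\times\A$. The essential reason the argument works is that the only products $\Phi$ is ever asked to respect are of the form ``invariant times arbitrary element,'' which is precisely the regime where $\Phi$ does behave like an algebra map.
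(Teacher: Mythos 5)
Your proposal is correct and is essentially the argument the paper leaves implicit when it says ``Now Theorem \ref{mainthm} implies the following'': transport a free $\I'$-basis of $\A$ through the linear isomorphism $\Phi$, using the partial multiplicativity $\Phi(ab)=\Phi(a)\Phi(b)$ for $a\in\I'$ together with $\mathcal{J}=\Phi(\I')$, and observe that centrality of $\mathcal{J}$ makes the left/right distinction immaterial. You have simply written out the bookkeeping the paper omits.
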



We also have an analogue of Corollary \ref{maincor}.  Indeed, define
$\mathcal{J}^{\mathcal{S}}$ as we did $\I^{\A}$, and let
$\mathcal{H}'=\mathcal{S}/\mathcal{J}^{\mathcal{S}}$. In contrast to
$\mathcal{H}$, $\mathcal{H}'$ is an algebra which is a quantum
deformation of the coordinate ring of the nilpotent cone (see
\cite{D}).

\begin{corollary}
For $q\in \mathbb{C}^{\times}$ not a root of unity or $q=1$, we have a (non-canonical) $\HH$-equivariant
isomorphism of graded vector spaces
\begin{equation*}
\mathcal{H}' \otimes \mathcal{J}  \cong \mathcal{S}.
\end{equation*}
Note that this is an isomorphism of $\mathcal{J}$-modules, but the map is not an algebra morphism.
\end{corollary}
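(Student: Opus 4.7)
The plan is to mirror the proof of Corollary \ref{maincor} verbatim, with the pair $(\A,\I)$ replaced by the pair $(\mathcal{S},\mathcal{J})$ and with Theorem \ref{mainthm} replaced by the preceding theorem, which already supplies freeness of $\mathcal{S}$ over $\mathcal{J}$. The main tool is again Lemma \ref{lemmaCG}, and the $\HH$-equivariance will be extracted by the same semisimplicity argument used in the proof of Corollary \ref{maincor}.

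Concretely, I would first verify the hypotheses of Lemma \ref{lemmaCG} for $(\mathcal{S}, \mathcal{J})$. The reflection equation algebra $\mathcal{S}$ carries its standard grading (transported, if one wishes, via $\Phi$ from the grading on $\A$, which is a graded isomorphism of comodules), it is unital with $\mathcal{S}_0 = \mathbb{C}$, and $\mathcal{J} = \Phi(\I')$ is a graded unital subalgebra with $\mathcal{J}_0 = \mathbb{C}$, since $\I' = \mathbb{C}[\Delta_1',\ldots,\Delta_n']$ by the version of Theorem \ref{dlthm} for $\I'$. Freeness of $\mathcal{S}$ as a left $\mathcal{J}$-module is precisely the preceding theorem. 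Lemma \ref{lemmaCG} then produces a graded linear complement $H$ to $\mathcal{S}\cdot\mathcal{J}_+ = \mathcal{J}^{\mathcal{S}}$ (the two coincide because $\mathcal{J}$ is central in $\mathcal{S}$) such that multiplication $H \otimes \mathcal{J} \to \mathcal{S}$ is an isomorphism of left $\mathcal{J}$-modules.

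To upgrade this to an $\HH$-equivariant statement, I would observe that $\mathcal{J}^{\mathcal{S}}$ is a graded $\HH$-subcomodule of $\mathcal{S}$. This is even cleaner than the analogous fact for $\I^{\A}$, because $\mathcal{S}$ is an honest comodule-algebra for the adjoint coaction $\rho$; therefore for $x \in \mathcal{S}$ and $y \in \mathcal{J}$ one has $\rho(xy) = \rho(x)\rho(y) = \rho(x)(y \otimes 1)$, which shows that $\mathcal{J}^{\mathcal{S}}$ is stable under $\rho$. Since $q$ is not a root of unity (or $q=1$), the category of graded $\HH$-comodules is semisimple, so the $\HH$-subcomodule $\mathcal{J}^{\mathcal{S}}$ admits a graded $\HH$-stable complement; I would choose $H$ above to be such a complement. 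Then the projection induces an isomorphism $H \cong \mathcal{H}' = \mathcal{S}/\mathcal{J}^{\mathcal{S}}$ of graded $\HH$-comodules, and the multiplication map $\mathcal{H}' \otimes \mathcal{J} \to \mathcal{S}$ is $\HH$-equivariant because $\mathcal{S}$ is a comodule-algebra.

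There is no serious obstacle beyond the preceding theorem; the argument is formal once freeness is available. The only step worth any care is the simultaneous choice of $H$ to be both graded and $\HH$-stable, which one handles by applying semisimplicity degree-by-degree to the exact sequence $0 \to \mathcal{J}^{\mathcal{S}}_d \to \mathcal{S}_d \to \mathcal{H}'_d \to 0$. The final sentence of the corollary, that the isomorphism is not an algebra morphism, is automatic from the fact that the splitting $\mathcal{H}' \hookrightarrow \mathcal{S}$ is only a linear lift, not a subalgebra embedding, even though both $\mathcal{H}'$ and $\mathcal{S}$ carry compatible algebra structures.
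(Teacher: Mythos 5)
Your proposal is correct and follows the same route the paper takes (the paper states this corollary with essentially no further argument, leaning on the parallel with Corollary \ref{maincor}): freeness from the preceding theorem, Lemma \ref{lemmaCG}, and semisimplicity of the graded $\HH$-comodule category to choose an $\HH$-stable complement $H$ to $\mathcal{J}^{\mathcal{S}}$. You also correctly note the one place where the reflection-equation setting is \emph{cleaner} than the $\A$ setting: because $\mathcal{S}$ is a genuine comodule-algebra, $\mathcal{J}^{\mathcal{S}}$ is a subcomodule for free, whereas for $\I^{\A}$ the paper had to invoke the special fact from \cite{DL} that $\alpha_q$ is multiplicative against invariants.
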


\section{The proof}
\subsection{Sketch of proof}
In this section we sketch the proof of Theorem \ref{mainthm}.  Our goal is to reduce the theorem to the following
standard fact:
\begin{proposition}
\label{fact} The polynomial algebra $\mathbb{C}[y_{1},...,y_{n}]$ in
$n$ indeterminates is a free module of rank $n!$ over the ring
symmetric polynomials $\mathbb{C}[y_{1},...,y_{n}]^{S_{n}}$.
Moreover the set
$$
\{y_{1}^{a_{1}} \cdots y_{n}^{a_{n}} : 0 \leq a_{i} < i \textit{ for
all } 1 \leq i \leq n \}
$$
is a basis.
\end{proposition}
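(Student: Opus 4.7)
My plan is to prove the proposition by induction on $n$, factoring the inclusion $\mathbb{C}[y_1,\ldots,y_n]^{S_n} \subset \mathbb{C}[y_1,\ldots,y_n]$ through the intermediate subring $R_{n-1}[y_n]$, where $R_{n-1} := \mathbb{C}[y_1,\ldots,y_{n-1}]^{S_{n-1}}$. The base case $n=1$ is immediate since the only basis element is $1$.

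For the inductive step, I would proceed in two sub-steps. First, by the inductive hypothesis, $\mathbb{C}[y_1,\ldots,y_{n-1}]$ is free over $R_{n-1}$ with basis $\{y_1^{a_1}\cdots y_{n-1}^{a_{n-1}} : 0 \leq a_i < i\}$. Since $y_n$ is transcendental over $\mathbb{C}[y_1,\ldots,y_{n-1}]$, tensoring over $\mathbb{C}$ with $\mathbb{C}[y_n]$ yields that $\mathbb{C}[y_1,\ldots,y_n]$ is free over $R_{n-1}[y_n]$ with the same basis.

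Second, I would show that $R_{n-1}[y_n]$ coincides with $\mathbb{C}[y_1,\ldots,y_n]^{S_n}[y_n]$ and is free of rank $n$ over $\mathbb{C}[y_1,\ldots,y_n]^{S_n}$ with basis $\{1,y_n,\ldots,y_n^{n-1}\}$. The equality of rings follows by solving the triangular recursion $e_k(y_1,\ldots,y_n) = e_k(y_1,\ldots,y_{n-1}) + y_n\,e_{k-1}(y_1,\ldots,y_{n-1})$ to express each $e_k(y_1,\ldots,y_{n-1})$ as a polynomial in the full symmetric functions $e_j(y_1,\ldots,y_n)$ and $y_n$. Spanning of $\{1,y_n,\ldots,y_n^{n-1}\}$ comes from the relation $f(y_n)=0$, where $f(X) := \prod_{i=1}^n(X-y_i)$ is monic of degree $n$ with coefficients in $\mathbb{C}[y_1,\ldots,y_n]^{S_n}$.

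The main obstacle is linear independence of $\{1,y_n,\ldots,y_n^{n-1}\}$ over $\mathbb{C}[y_1,\ldots,y_n]^{S_n}$; the rest is routine assembly. I would handle this by a Vandermonde argument: given a hypothetical relation $\sum_{k=0}^{n-1} c_k\,y_n^k = 0$ with symmetric $c_k$, I would pick any $n$ distinct complex numbers $\beta_1,\ldots,\beta_n$ and, for each $j$, specialize $(y_1,\ldots,y_n)$ so that $y_n = \beta_j$ while $(y_1,\ldots,y_{n-1})$ is any ordering of the remaining $\beta$'s. Because the $c_k$ are symmetric, this produces the system $\sum_k c_k(\beta_1,\ldots,\beta_n)\,\beta_j^k = 0$ for $j=1,\ldots,n$, whose coefficient matrix is an invertible Vandermonde; so each $c_k$ vanishes on a Zariski-dense set of symmetric data and hence identically. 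Combining the two sub-steps delivers the claimed staircase basis of size $n!$.
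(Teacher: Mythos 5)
The paper does not prove this proposition; it is introduced with the phrase ``the following standard fact'' and serves as the terminus of the reduction in the main argument, so there is no in-paper proof to compare your attempt against.

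Your argument is correct, and it is the standard elementary proof. The induction through the intermediate ring $R_{n-1}[y_n]$ is sound: the triangular recursion $e_k(y_1,\ldots,y_n)=e_k(y_1,\ldots,y_{n-1})+y_n\,e_{k-1}(y_1,\ldots,y_{n-1})$ correctly yields $R_{n-1}[y_n]=\mathbb{C}[y_1,\ldots,y_n]^{S_n}[y_n]$; the monic relation $\prod_{i=1}^n(y_n-y_i)=0$ with coefficients in $\mathbb{C}[y_1,\ldots,y_n]^{S_n}$ gives spanning of $\{1,y_n,\ldots,y_n^{n-1}\}$; and your Vandermonde specialization, combined with the observation that tuples with pairwise-distinct coordinates are Zariski-dense in $\mathbb{C}^n$, gives linear independence of that set, so transitivity of freeness assembles the staircase basis of size $(n-1)!\cdot n=n!$. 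If you want a one-line substitute for the Vandermonde step once spanning is in hand, you can compare graded Hilbert series: the surjection from the free module on the staircase monomials onto $\mathbb{C}[y_1,\ldots,y_n]$ is degree-preserving and both sides have series $\prod_{i=1}^n(1-t^i)^{-1}\cdot\prod_{i=1}^n(1+t+\cdots+t^{i-1})=(1-t)^{-n}$, forcing it to be an isomorphism. Either route is fine; since the paper supplies no proof, there is nothing further to reconcile.
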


We would like to mimic the proof in \cite{BL} and define a
filtration $F$ on $\A$ by setting $F^{d}\A=span\{x^{a}:trace(a) \leq
d \}$, and then appeal to Lemma \ref{BL}. The complication is that for $n \geq 3$ this filtration does
not preserve the algebra structure of $\A$. For example $F^{0} \A
\cdot F^{0} \A \nsubseteq F^{0} \A$ since for example
$$x_{23}x_{12}=x_{12}x_{23}-(q-q^{-1})x_{13}x_{22}.$$ To get around
this we will use a succession of filtrations, each one of which
slightly simplifies the quantum relations.

To explain the idea let us consider the case $\A=\mathcal{O}_{q}(M_{3}(\mathbb{C}))$.  Ignoring the relations of type (1)-(3), the
complicated (i.e. ``curly'') relations in $\A$ are
\begin{equation*}
\xymatrix{ {\bullet} \ar@{~>}[ddr] \ar@{~>}[dr] \ar@{~>}@/^/[ddrr]
\ar@{~>}[drr] & {\bullet} \ar@{~>}[ddr] \ar@{~>}[dr]
& {\bullet} \\
{\bullet} \ar@{~>}[dr] \ar@{~>}[drr] & {\bullet} \ar@{~>}[dr] & {\bullet} \\
{\bullet} & {\bullet} & {\bullet} }
\end{equation*}

Let $F$ be the filtration on $\A$
defined by
\begin{equation*}
F^{d} \A = span \{ x^{a} : \sum_{|i-j|< 2} a_{ij} \leq d \}.
\end{equation*}
$F$ preserves the algebra structure of $\A$ (cf. Lemma \ref{St} below) and so we consider the associated graded algebra
$\A'=gr_{F}\A$.  In $\A'$ most of the complicated relations disappear and we are
left with
\begin{equation*}
\xymatrix{ {\bullet} \ar@{~>}[dr] & {\bullet}
& {\bullet} \\
{\bullet} & {\bullet} \ar@{~>}[dr] & {\bullet} \\
{\bullet} & {\bullet} & {\bullet} }
\end{equation*}

Our next step is to introduce a filtration on $\A'$ which will
further simplify the relations.  $\A'$ has a PBW type basis (cf. Lemma \ref{nonsenselemma}(3) and Lemma \ref{St}(4) below), which
by abuse of notation, we can continue to denote as $\{ x^{a}\}$. Define a filtration
$F'$ on $\A'$ by
\begin{equation*}
F'_{d} \A' = span \{ x^a : \sum_{|i-j|<1} a_{ij} \leq d \}.
\end{equation*}

We show below that $F'$ is compatible with the product in $\A'$, and
hence we can consider $\A''=gr_{F'}\A'$.  Now, in $\A''$ all the
complicated relations disappear.  Moreover, the image, $\I''$, of
the subalgebra $\I$ in $\A''$ consists of the symmetric polynomials
in the diagonal entries.  Therefore it is easy to see that $\A''$ is
free over $\I''$ using Proposition \ref{fact}.  By Lemma \ref{BL} we
conclude our result.
\subsection{$q$-Mutation Systems}

We now introduce the terminology needed to handle
successions of filtrations.

Suppose we have an ordered set $\{I,\leq\}$ and a
collection of indeterminates $\{x_{i}\}_{i \in I}$.  We would
like to discuss an algebra on the $\{x_{i}\}$ subject to certain
commutation relations.  Let $\mathcal{F}$ be the free algebra on
the $\{x_{i}\}$.  A \textbf{standard monomial} $x_{i_{1}} \cdots x_{i_{l}}
\in \mathcal{F}$ is one such that $i_{1} \leq \cdots \leq i_{l}$.

\begin{definition}
$ $
\begin{enumerate}
\item
A $\mathbf{q}$-\textbf{mutation system} is a tuple $S=(\{q_{ij}\},\{f_{ij}\})$ where
$i < j \in I$, $q_{ij} \in \mathbb{C}^{\times}$, and $f_{ij} \in
\mathcal{F}$. We denote by $\mathcal{A}(S)$ the quotient of $\mathcal{F}$ by
the two-sided ideal generated by
$x_{j}x_{i}-(q_{ij}x_{i}x_{j}+f_{ij})$.
\item
Let  $\xi=x_{i_{1}} \cdots x_{i_{l}}$ be a monomial in
$\mathcal{F}$, and suppose there exists $r$ such that $i_{r} <
i_{r-1}$.  Then an \textbf{elementary mutation} of $\xi$ in the $r^{th}$
position is the sum of elements
$$
(q_{i_{r}i_{r-1}}x_{i_{1}}\cdots
x_{i_{r-2}}x_{i_{r}}x_{i_{r-1}}x_{i_{r+1}}\cdots x_{i_{l}} )+(
x_{i_{1}}\cdots x_{i_{r-2}}f_{i_{r}i_{r-1}}x_{i_{r+1}} \cdots
x_{i_{l}}).$$ A elementary mutation of a polynomial $f \in
\mathcal{F}$ is the polynomial obtained by an elementary mutation of
one of its monomials.
\item A $q$-mutation system $S$ has \textbf{finite
mutation property} (FMP) if any monomial $x_{i_{1}} \cdots x_{i_{l}}$
can be transformed into a linear combination of standard monomials
using finitely many elementary mutations.
\item The $q$-mutation system $S$ satisfies \textbf{Poincare-Birkhoff-Witt property} (PBW) if the images of standard
monomials form a basis in $\mathcal{A}(S)$.
\end{enumerate}

\end{definition}

%
%
A \textbf{weighting} of $I$ is a function $w:I \rightarrow
\mathbb{Z}_{\geq0}$.  A weighting $w$ defines a filtration
$F_{w}$ of $\mathcal{F}$ by $deg_{F_{w}}x_{i_{1}} \cdots
x_{i_{l}}=\sum w(i_{k})$.  If a q-mutation system $S$
satisfies the PBW property then a weighting $w$ defines a linear
filtration $F_{w,S}$ on $\mathcal{A}(S)$ in a natural way. Precisely,
$F_{w,S}^{d}\mathcal{A}(S)$ is the span of all images of standard monomials
$\xi$ such that $deg_{F_{w}}\xi \leq d$.

We call a weighting $w$
\textbf{compatible} with $S=(\{q_{ij}\},\{f_{ij}\})$ if
for all $i < j$, $deg_{F_{w}}f_{ij} \leq w(i)+w(j)$.  If a weighting $w$ is compatible with
$S=(\{q_{ij}\},\{f_{ij}\})$ then we define a $q$-mutation system
$$\sigma_{w}(S)=(\{q_{ij}\},\{\sigma_{F_{w}}^{w(i)+w(j)}(f_{ij})\}).$$  Here we identify the linear spaces $gr_{F_{w}}\mathcal{F}$ with $\mathcal{F}$.

\begin{lemma}
\label{nonsenselemma}
 Let $S=(\{q_{ij}\},\{f_{ij}\})$ be a $q$-mutation system
with a compatible weighting $w$.  Suppose $S$ satisfies the FMP and
PBW properties.  Consider the natural projection $p:\mathcal{F}
\rightarrow \mathcal{A}(S)$. Then,
\begin{enumerate}
\item  $p(F_{w}^{d}\mathcal{F}) =
F_{w,S}^{d}\mathcal{A}(S)$.
\item The linear filtration $F_{w,S}$ is
compatible with the algebra structure of $\mathcal{A}(S)$.
\item Suppose $\sigma_{w}(S)$ satisfies FMP.  Then there is a natural isomorphism
$ gr_{F_{w,S}}\mathcal{A}(S) \cong \A(\sigma_{w}(S)). $
\item $\sigma_{w}(S)$ satisfies the PBW property.
\end{enumerate}
\end{lemma}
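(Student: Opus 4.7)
The proof will be organized by tackling the four assertions in order, with parts (3) and (4) handled together since (4) follows from the isomorphism constructed in (3).

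\emph{Parts (1) and (2).} For (1), the inclusion $F_{w,S}^d \mathcal{A}(S) \subseteq p(F_w^d \mathcal{F})$ is immediate from the definition of $F_{w,S}$, since a standard monomial of weight $\leq d$ is itself a monomial in $F_w^d \mathcal{F}$. For the reverse inclusion, I take a monomial $\xi \in F_w^d \mathcal{F}$ and use the FMP hypothesis on $S$ to rewrite $p(\xi)$ as a linear combination of images of standard monomials; the key observation is that elementary mutations do not increase $F_w$-weight. Examining an elementary mutation at position $r$: the first summand is a permuted monomial of the same weight as $\xi$, and the second summand involves $f_{i_r i_{r-1}}$, which has $F_w$-weight at most $w(i_{r-1}) + w(i_r)$ by the compatibility assumption. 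Hence after finitely many mutations, $p(\xi)$ is expressed as a sum of images of standard monomials of weight at most $d$, so $p(\xi) \in F_{w,S}^d \mathcal{A}(S)$. Part (2) then follows formally from (1) and multiplicativity of $F_w$ on the free algebra: $F_{w,S}^d \cdot F_{w,S}^e = p(F_w^d) \cdot p(F_w^e) = p(F_w^d \cdot F_w^e) \subseteq p(F_w^{d+e}) = F_{w,S}^{d+e}$.

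\emph{Parts (3) and (4).} I will construct an explicit algebra homomorphism $\psi: \mathcal{A}(\sigma_w(S)) \to gr_{F_{w,S}} \mathcal{A}(S)$ sending each generator $x_i$ to the symbol $\sigma_{F_{w,S}}^{w(i)}(p(x_i))$, and then show it is an isomorphism. To verify $\psi$ is well-defined, I take a defining relation of $\sigma_w(S)$, namely $x_j x_i - q_{ij} x_i x_j - \sigma_{F_w}^{w(i)+w(j)}(f_{ij})$, and check that its image vanishes in $gr_{F_{w,S}}^{w(i)+w(j)} \mathcal{A}(S)$. Starting from $p(x_j) p(x_i) = q_{ij} p(x_i) p(x_j) + p(f_{ij})$ in $\mathcal{A}(S)$, I apply the symbol map at level $w(i) + w(j)$; Lemma \ref{symbprod} handles the factorization of symbols of products, while part (1) together with compatibility of $w$ ensures that $p(f_{ij}) \in F_{w,S}^{w(i)+w(j)} \mathcal{A}(S)$ and that its symbol at this level coincides with the image under $p$ of the top-weight part of $f_{ij}$, namely $\sigma_{F_w}^{w(i)+w(j)}(f_{ij})$.

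To prove $\psi$ is an isomorphism, I will compare standard monomial expansions on both sides. On the right, the PBW property of $S$ combined with the definition of $F_{w,S}$ shows that symbols of standard monomials form a basis of $gr_{F_{w,S}} \mathcal{A}(S)$, graded piece by graded piece. By construction $\psi$ sends standard monomials of $\mathcal{A}(\sigma_w(S))$ onto these symbols in a degree-preserving bijection. On the left, the FMP hypothesis on $\sigma_w(S)$ guarantees that standard monomials span $\mathcal{A}(\sigma_w(S))$. A spanning set mapping bijectively onto a basis must itself be a basis, which simultaneously forces $\psi$ to be an isomorphism (yielding (3)) and shows standard monomials form a basis of $\mathcal{A}(\sigma_w(S))$, i.e., PBW for $\sigma_w(S)$ (yielding (4)).

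The main obstacle is the bookkeeping in part (3): one must carefully identify $gr_{F_w} \mathcal{F}$ with $\mathcal{F}$ via homogeneous components and track how the symbol map intertwines with the projection $p$ and with products. Once this identification is in place, the relation check is a formal computation using Lemma \ref{symbprod} and part (1), and the isomorphism/PBW statements follow by a clean bijection-of-bases argument.
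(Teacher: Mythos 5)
Your proposal is correct and follows essentially the same strategy as the paper: part (1) is argued verbatim via FMP and monotonicity of $F_w$-degree under elementary mutations, (2) is deduced formally from (1), and (3)--(4) rest on the natural map $\mathcal{A}(\sigma_w(S)) \to gr_{F_{w,S}}\mathcal{A}(S)$, with FMP for $\sigma_w(S)$ supplying spanning and PBW for $S$ supplying the basis on the target. The only cosmetic difference is in the conclusion of (3): the paper compares dimensions of graded pieces, while you observe that a spanning set mapping bijectively onto a basis is itself a basis -- an equivalent (and arguably cleaner) formulation that also avoids the implicit finite-dimensionality the dimension count leans on.
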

\begin{proof}
To prove (1) note that by definition of $F_{w,S}$ we have the inclusion $p(F_{w}^{d}\mathcal{F}) \supset F_{w,S}^{d}\mathcal{A}(S)$.
Conversely, let $\xi \in F_{w}^{d}\mathcal{F}$.  Since $S$ satisfies FMP there exist elements
$\xi_{1}=\xi, \xi_{2},...,\xi_{n} \in \mathcal{F}$ such that
$\xi_{n}$ is a linear combination of standard monomials, and
$\xi_{i+1}$ is an elementary mutation of $\xi_{i}$ for all $i$. By
the compatibility condition,
$$
deg_{F_{w}}(\xi_{1}) \geq deg_{F_{w}}(\xi_{2}) \geq \cdots \geq
deg_{F_{w}}(\xi_{n}).
$$
Therefore $\xi_{n} \in F_{w}^{d}\mathcal{F}$, and since it's a
linear combination of standard monomials $p(\xi_{n}) \in
F_{w,S}^{d}\mathcal{A}(S)$.  Since moreover $p(\xi_{1})= \cdots = p(\xi_{n})$, we
conclude $p(\xi) \in F_{w,S}^{d}\mathcal{A}(S)$.

Part (2) follows from part (1).

For part (3), the natural morphism $f:\mathcal{A}(\sigma_{w}(S)) \rightarrow
gr_{F_{w,S}}\mathcal{A}(S)$ is surjective.  Now note that the weighting $w$
defines a grading on $\mathcal{F}$.  Since the relations defining
$\mathcal{A}(\sigma_{w}(S))$ are homogenous, $\mathcal{A}(\sigma_{w}(S))$ inherits an
induced grading, the $d^{th}$ component of which we denote
$\mathcal{A}^{d}(\sigma_{w}(S))$. The morphism $f$ is clearly graded, and
hence $f(\mathcal{A}^{d}(\sigma_{w}(S)))=gr_{F_{w,S}}^{d}\mathcal{A}(S)$, which implies
$$
dim(\mathcal{A}^{d}(\sigma_{w}(S)) \geq dim(gr_{F_{w,S}}^{d}\mathcal{A}(S)).
$$
To see that $ dim(\mathcal{A}^{d}(\sigma_{w}(S)) \leq
dim(gr_{F_{w,S}}^{d}\mathcal{A}(S))$ note that since $\sigma_{w}(S)$ satisfies
FMP, the images of standard monomials of degree $d$ span
$\mathcal{A}^{d}(\sigma_{w}(S))$. Since $S$ satisfies the PBW property, the
standard monomials of degree $d$ form a basis for
$gr_{F_{w,S}}^{d}\mathcal{A}(S)$.

Finally, part (4) follows from (3).
\end{proof}

\subsection{Proof of main theorem}

We now specialize the terminology introduce above to our case.  Let $I=\{ (i,j) :
i,j=1,...,n \}$ be ordered lexicographically, i.e. $(i,j) \leq (k,l)$ if, and only if, $ni+j \leq nk+l$.  We introduce
a family $\{S_{t}\}$ of $q$-mutation systems for $t=1,...,n$.

Let us first define the $q$-mutation system that's naturally associated
to $\A$.  Let $S_{1} = (\{q_{ij,kl}\},\{f_{ij,kl}\})$ where
%
%
$$
q_{ij,kl} = \left\{
              \begin{array}{ll}
                1 & \hbox{if \xymatrix{ {x_{ij}} \ar@{-}[r] & {x_{kl}}} or \xymatrix{ {x_{ij}} \ar@{~>}[r] & {x_{kl}}};} \\
                q^{-1} & \hbox{if \xymatrix{ {x_{ij}} \ar[r] & {x_{kl}}}.}
              \end{array}
            \right.
$$
and
$$
f_{ij,kl} = \left\{
              \begin{array}{ll}
                0 & \hbox{if \xymatrix{ {x_{ij}} \ar@{-}[r] & {x_{kl}}} or \xymatrix{ {x_{ij}} \ar[r] & {x_{kl}}};} \\
                (q^{-1}-q)x_{il}x_{kj} & \hbox{if \xymatrix{ {x_{ij}} \ar@{~>}[r] & {x_{kl}}}.}
              \end{array}
            \right.
$$
It's clear that $\A(S_{1})=\A$.

Now let $t \in  \{1,..., n\}$.  For $\iota = (i,j) \in I$, let $\epsilon(\iota)=|i-j|$.  Let $w_{t}$ be the weighting defined by $w_{t}(\iota)=1$ if
$\epsilon(\iota) < n-t$ and zero else.  Define $S_{t}= (\{q_{ij,kl}\},\{f_{ij,kl}^{(t)}\})$ to be the $q$-mutation system where the scalars $q_{ij,kl}$ are the same as above and,
$$
f_{ij,kl}^{(t)} = \left\{
              \begin{array}{ll}
                0 & \hbox{if \xymatrix{ {x_{ij}} \ar@{-}[r] & {x_{kl}}} or \xymatrix{ {x_{ij}} \ar[r] & {x_{kl}}};} \\
                (q^{-1}-q)x_{il}x_{kj}w_{t-1}(i,l)w_{t-1}(k,j) & \hbox{if \xymatrix{ {x_{ij}} \ar@{~>}[r] & {x_{kl}}}.}
              \end{array}
            \right.
$$
Set $\A_{t}=\A(S_{t})$ to be the algebra associated to $S_{t}$.

\pagebreak

\begin{lemma}
\label{St}
$ $
\begin{enumerate}
\item The weighting $w_{t}$ is compatible with $S_{t}$.
\item $S_{t}$ and $S_{t+1}$ are related by
$\sigma_{w_{t}}(S_{t})=S_{t+1}$.
\item $S_{t}$ satisfies that FMP property.
\item $S_{t}$ satisfies that PBW property.
\end{enumerate}
\end{lemma}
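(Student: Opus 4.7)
I would prove the four parts in order, with (4) coming by induction via Lemma \ref{nonsenselemma}(4). The single combinatorial input that makes (1) and (2) work is the strict inequality
\[
\epsilon(i,j),\ \epsilon(k,l) \;<\; \max(\epsilon(i,l),\epsilon(k,j))
\]
for any indices with $i<k,\ j<l$. I would prove this by a case split on whether $i \leq j$: if $i \leq j$ then $|i-l| = l-i > j-i = |i-j|$ (using $l > j$), while if $i > j$ then $|k-j| = k-j > i-j = |i-j|$ (using $k > i$); the inequality for $\epsilon(k,l)$ is symmetric (split on $k \leq l$ vs.\ $k > l$).

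For (1), only the curly relations need attention, since the other $f_{ij,kl}^{(t)}$ are zero. When $f_{ij,kl}^{(t)} \neq 0$ the coefficient $w_{t-1}(i,l) w_{t-1}(k,j) = 1$, i.e.\ $\epsilon(i,l), \epsilon(k,j) \leq n-t$. Setting $M = \max(\epsilon(i,l),\epsilon(k,j)) \leq n-t$, the input inequality forces $\epsilon(i,j), \epsilon(k,l) < M \leq n-t$, so $w_t(i,j) = w_t(k,l) = 1$, and consequently $\deg_{F_{w_t}}(f_{ij,kl}^{(t)}) = w_t(i,l)+w_t(k,j) \leq 2 = w_t(i,j)+w_t(k,l)$. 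For (2), the scalars $q_{ij,kl}$ agree by definition, and for the $f$'s I compute the symbol $\sigma_{F_{w_t}}^{w_t(i,j)+w_t(k,l)}(f_{ij,kl}^{(t)})$: the subcase $w_{t-1}(i,l)w_{t-1}(k,j) = 0$ is trivial (both $f^{(t)}$ and $f^{(t+1)}$ vanish), while in the other subcase the analysis above pins the target degree at $2$, so the symbol equals $(q^{-1}-q)x_{il}x_{kj}$ precisely when $w_t(i,l)=w_t(k,j)=1$, matching the definition of $f_{ij,kl}^{(t+1)}$.

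For (3), I would prove termination via the lex order on index-sequences read from the left, which is a well-ordering on sequences of fixed length over the finite ordered set $I$. An elementary mutation of $x_{\alpha_1}\cdots x_{\alpha_l}$ at position $r$ leaves positions $1,\ldots,r-2$ unchanged and replaces the index $\alpha_{r-1}=(k,l)$ either by $(i,j)=\alpha_r$ (in the $q_{ij,kl}x_{ij}x_{kl}$ summand) or by $(i,l)$ (in the curly $x_{il}x_{kj}$ summand). Both replacements are lex-strictly less than $(k,l)$, since $(i,j)<(k,l)$ and $i<k$, so every summand of an elementary mutation is strictly lex-less than the original, and the rewriting terminates after finitely many steps.

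For (4), I would induct on $t$. The base $S_1=\mathcal{A}$ has PBW by Theorem \ref{PBW}. Inductively, assuming PBW for $S_t$, parts (1), (2), (3) supply exactly the hypotheses of Lemma \ref{nonsenselemma}(4) for $S=S_t$, $w=w_t$, $\sigma_{w}(S)=S_{t+1}$ (with FMP for $\sigma_w(S)$ furnished by (3)), yielding PBW for $S_{t+1}$. The main obstacle is the bookkeeping in (2), where one must split cleanly on the values of $w_{t-1}(i,l)w_{t-1}(k,j)$ and $w_t(i,l)w_t(k,j)$ to align the symbol with the formula defining $f_{ij,kl}^{(t+1)}$; once the combinatorial inequality of the first paragraph is in hand, every other step of the proof is essentially mechanical.
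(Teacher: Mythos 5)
Your proof is correct, and for parts (1), (2), and (4) it tracks the paper's argument closely — the one addition being that you actually prove the combinatorial inequality $\epsilon(i,j),\epsilon(k,l) < \max\{\epsilon(i,l),\epsilon(k,j)\}$ by a case split, which the paper states without proof. For part (3), however, you take a genuinely different termination argument, and it is worth comparing. The paper introduces the descent statistic $des$ (inversion count, extended to polynomials by taking the max over monomials) and asserts that an elementary mutation strictly decreases $des$. You instead observe that an elementary mutation at position $r$ fixes positions $1,\ldots,r-2$ and strictly decreases the index at position $r-1$ (from $(k,l)$ to $(i,j)$ in the $q$-summand, or to $(i,l)$ in the $f$-summand, both lex-less since $(i,j)<(k,l)$ and $i<k$), so each summand is strictly lex-less as an index sequence; termination then follows from well-foundedness of the multiset order over the finite lex-ordered set of index sequences of a fixed length.

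Your route for (3) is not only different but in fact avoids a gap in the paper's: the asserted strict inequality $des(\xi')<des(\xi)$ does not always hold. For instance with $n=3$ and $t=1$, take $\xi = x_{33}x_{33}x_{12}$, which has $des(\xi)=2$. Mutating at the third position, the $f$-summand is (up to scalar) $x_{33}x_{13}x_{32}$, whose index sequence $((3,3),(1,3),(3,2))$ still has two inversions, so $des$ of the mutation equals $2 = des(\xi)$ rather than strictly dropping. Your lex ordering does drop here — the second index decreases from $(3,3)$ to $(1,3)$ — so your version of the termination argument is the one that actually goes through. One small point to make fully rigorous: the phrase ``the rewriting terminates after finitely many steps'' deserves a sentence invoking the well-foundedness of the multiset extension of a well-order (or, equivalently, an induction on the multiset of index sequences occurring in the polynomial), since a mutation replaces one monomial by a sum of one or two monomials rather than by a single one.
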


\begin{proof}

For part (1) suppose $\xymatrix{ {x_{ij}} \ar@{~>}[r] & {x_{kl}}}$.  Note that then
$$
max\{\epsilon(il),\epsilon(kj)\} > max\{\epsilon(ij),\epsilon(kl)\}.
$$
We want to show that $deg_{F_{w_{t}}}f_{ij,kl}^{(t)} \leq
deg_{F_{w_{t}}}x_{ij}x_{kl}$.  The only nontrivial case is when
$w_{t-1}(il)=w_{t-1}(kj)=1$.  In this case
$max\{\epsilon(il),\epsilon(kj)\} \leq n-t$, and so
$max\{\epsilon(ij),\epsilon(kl)\} < n-t$ and so
$w_{t}(ij)=w_{t}(kl)=1$.  Then
$$
deg_{F_{w_{t}}}x_{ij}x_{kl}=2 \geq deg_{F_{w_{t}}}f_{ij,kl}^{(t)}.
$$

To show part (2) we must prove that
$$
f_{ij,kl}^{(t+1)}=\sigma_{F_{w_{t}}}^{w(ij)+w(kl)}(f_{ij,kl}^{(t)})
$$
If $f_{ij,kl}^{(t)}=0$ then $f_{ij,kl}^{(t+1)}=0$, so the only
nontrivial case is when $f_{ij,kl}^{(t)} \neq 0$.  As in the previous
case, this only happens when $w_{t}(ij)+w_{t}(kl)=2$.  Therefore we
have to show that $deg_{F_{w_{t}}}f_{ij,kl}^{(t)}=2$ if, and only
if, $w_{t}(il)w_{t}(kj)=1$.  But this is clear since
$deg_{F_{w_{t}}}f_{ij,kl}^{(t)}=w_{t}(il)+w_{t}(kj)$.

For part (3) we define the descent statistic on an element of $\mathcal{F}$
by first defining
$$
des(x_{\iota_{1}} \cdots x_{\iota_{n}})=\sharp\{(k,l): k<l \text{
and } \iota_{k} > \iota_{l} \}.
$$
Extend this definition to an arbitrary element in $\mathcal{F}$ by
$$des(\sum \xi_{i})=max\{des(\xi_{i})\},$$ where $\xi_{i}$ are
monomials in $\mathcal{F}$.  To prove that $S_{t}$ satisfies FMP it
clearly suffices to show that if $\xi'$ is an elementary mutation of
$\xi$ then
$$
des(\xi')<des(\xi).
$$
For this it is enough to show that if $\iota_{k} < \iota_{k-1}$,
then
$$
des(x_{\iota_{1}} \cdots x_{\iota_{n}})>des(x_{\iota_{1}} \cdots
x_{\iota_{k-1}}f^{(t)}_{\iota_{k}\iota_{k-1}}x_{\iota_{k+2}}\cdots
x_{\iota_{n}}).
$$

The only nontrivial case is when $\xymatrix{ x_{\iota_{k}} \ar@{~>}[r] &
x_{\iota_{k-1}}}$.  This is immediate from our definition of
$f^{(t)}_{\iota_{k}\iota_{k-1}}$ and the definition of lexicographic
ordering.

To prove part (4) first note that $S_{1}$ satisfies the PBW property
by Theorem \ref{PBW}.  Now, by induction, Lemma \ref{nonsenselemma},
and part (2) above, we conclude that $S_{t}$ satisfies the PBW
property.

\end{proof}

By Lemma \ref{nonsenselemma}(2) and Lemma \ref{St}(2), we make the
identification
$$
gr_{F_{w_{t}}}\A_{t}=\A_{t+1}.
$$
Notice that the algebra $\A_{n}$ has no complicated ``curly''
relations.  We now want to use Lemma \ref{BL} to reduce Theorem
\ref{mainthm} to Proposition \ref{fact}. In order to do this we
first consider the behavior of the algebra $\I$ with respect to the
succession of filtrations $F_{w_{t}}$.

\begin{definition}
Let $\I_{1}=\I$ and define $\I_{t} \subset \A_{t}$ by induction to
be the associated graded algebra $gr_{F_{w_{t-1}}}\I_{t-1}$, where
$\I_{t-1} \subset \A_{t-1}$ inherits the induced filtration from
$F_{w_{t-1}}$.
\end{definition}

\begin{proposition}
The algebra $\I_{t}$ is generated by $\{\Delta_{d}^{(t)}:t=1,..., n \}$ where
$$
\Delta_{d}^{(t)} = \sum_{I=\{i_{1}< \cdots < i_{d}\}} \sum_{\substack{ w \in S_{I} \\ |i-w(i)| \leq
n-t }} (-q)^{l(w)}x_{i_{1}w(i_{1})} \cdots x_{i_{d}w(i_{d})}.
$$
\end{proposition}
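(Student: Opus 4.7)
The plan is to proceed by induction on $t$. The base case $t=1$ is Theorem \ref{dlthm}, since $\Delta_d^{(1)} = \Delta_d$: the constraint $|i - w(i)| \leq n-1$ is automatically satisfied. The engine of the induction is the computation
\[
\sigma_{F_{w_t}}(\Delta_d^{(t)}) = \Delta_d^{(t+1)},
\]
which is immediate from the definitions: a monomial $x_{i_1 w(i_1)} \cdots x_{i_d w(i_d)}$ appearing in $\Delta_d^{(t)}$ has $F_{w_t}$-degree $\#\{k : |i_k - w(i_k)| < n-t\} \leq d$, with equality precisely when $|i_k - w(i_k)| \leq n-(t+1)$ for every $k$, i.e.\ when the monomial appears in $\Delta_d^{(t+1)}$. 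In particular $\Delta_d^{(t+1)} \in \I_{t+1}$.

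The main obstacle is showing that the $\Delta_d^{(t+1)}$ actually generate all of $\I_{t+1} = gr_{F_{w_t}} \I_t$: passing to associated graded could in principle produce extra elements. To rule this out, I would prove, by a \emph{downward} induction on $t$, that $\Delta_1^{(t)}, \dots, \Delta_n^{(t)}$ are algebraically independent in $\A_t$. For the base case $t = n$: the weighting $w_{n-1}(i,j) = 1$ forces $i = j$, so a nonzero $f^{(n)}_{ij,kl}$ would require $i = l$ and $k = j$, incompatible with the curly-arrow condition $i<k$, $j<l$; hence every $f^{(n)}_{ij,kl}$ vanishes. The diagonal entries $x_{11}, \dots, x_{nn}$ therefore pairwise commute in $\A_n$, and by the PBW property (Lemma \ref{St}(4)) generate a commutative polynomial subalgebra, so $\Delta_d^{(n)} = e_d(x_{11}, \dots, x_{nn})$ are algebraically independent. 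For the downward inductive step, suppose $P = \sum_\alpha c_\alpha X^\alpha \neq 0$ satisfies $P(\Delta^{(t)}) = 0$ in $\A_t$, and set $D^* := \max\{D_\alpha : c_\alpha \neq 0\}$ with $D_\alpha := \sum_{d=1}^n d\,\alpha_d$. Since all terms with $D_\alpha < D^*$ lie in $F_{w_t}^{D^*-1} \A_t$, passing to $gr_{F_{w_t}}^{D^*} \A_t = \A_{t+1}$ yields $\sum_{D_\alpha = D^*} c_\alpha \Delta^{(t+1), \alpha} = 0$ in $\A_{t+1}$, contradicting algebraic independence at level $t+1$.

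With algebraic independence in hand, the generation claim closes cleanly. The forward inductive hypothesis combined with algebraic independence makes $\I_t$ the polynomial algebra on $\Delta_1^{(t)}, \dots, \Delta_n^{(t)}$, and each monomial $\Delta^{(t),\alpha}$ has $F_{w_t}$-degree exactly $D_\alpha$ (its symbol $\Delta^{(t+1),\alpha}$ being nonzero by algebraic independence at level $t+1$). Hence $F_{w_t}^d \I_t = \operatorname{span}\{\Delta^{(t),\alpha} : D_\alpha \leq d\}$, and taking the associated graded gives $\I_{t+1} = \operatorname{span}\{\Delta^{(t+1),\alpha}\}_\alpha$, manifestly generated by the $\Delta_d^{(t+1)}$.
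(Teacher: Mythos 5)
Your proof is correct, and it takes a genuinely different route from the one in the paper. Both arguments hinge on the same underlying fact — that the filtration $F_{w_t}$ restricted to $\I_t$ is diagonal with respect to the monomial basis $\{\Delta^{(t),\alpha}\}$, with $\deg_{F_{w_t}}\Delta^{(t),\alpha}=D_\alpha$ and $\sigma_{F_{w_t}}(\Delta^{(t),\alpha})=\Delta^{(t+1),\alpha}$ — but they establish it differently. The paper introduces the two-sided ideal $\mathcal{O}\subset\F$ generated by off-diagonal entries, observes it is mutation-stable, and tracks the effect of mutations on elements of the form $h(x_{11},\dots,x_{nn})+f$ with $f\in\mathcal{O}$. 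The "diagonal part" $h'(x_{11},\dots,x_{nn})$ survives undisturbed in degree, which (together with the classical algebraic independence of the elementary symmetric polynomials, used implicitly) pins down the degree of $h(\Delta^{(t)})$ and lets Lemma \ref{symbprod} do the rest. You instead isolate algebraic independence of $\Delta_1^{(t)},\dots,\Delta_n^{(t)}$ as a separate fact and prove it by a \emph{downward} induction from $t=n$, where all curly relations vanish and the diagonals commute by inspection. The forward induction for generation then uses this independence to show the filtration is diagonal. Your approach has the virtue of making explicit what the paper only uses implicitly (the non-vanishing/independence needed to ensure degrees don't drop), and in fact yields the slightly stronger statement that each $\I_t$ is a genuine polynomial ring on the $\Delta_d^{(t)}$; the paper's approach is more self-contained within the $q$-mutation framework and avoids the separate downward induction. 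One small elision in your write-up: the assertion $F_{w_t}^d\I_t=\operatorname{span}\{\Delta^{(t),\alpha}:D_\alpha\leq d\}$ needs the observation that if $x=\sum c_\alpha\Delta^{(t),\alpha}$ has top weight $D^*$, then the top-weight part has nonzero symbol by independence at level $t+1$, so $\deg_{F_{w_t}}(x)=D^*$; this is the same argument as your downward inductive step and closes the gap immediately.
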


\begin{proof}
Define $\mathcal{O} \subset \mathcal{F}$ to be the two-sided ideal generated by $\{ x_{ij} : i \neq j \}$.
Note that $\mathcal{O}$ is invariant under mutations with respect to any system $S_{t}$.  Let $y_{1},...,y_{n}$ be indeterminates
and set $\F_{n}$ to be the free algebra on the $\{ y_{i} \}$.

Given an element $h \in \F_{n}$ we can consider the evaluation
$h(x_{11},...,x_{nn}) \in \F$.  Now, for any $t \in \{1,...,n\}$ we can preform a sequence of (finitely many) elementary mutations on
$h(x_{11},...,x_{nn})$ (with respect to $S_{t}$) to obtain an element of the form $h'(x_{11},...,x_{nn})+f$.  Here $h'(x_{11},...,x_{nn})$ is a linear combination of standard monomials and $f \in \mathcal{O}$.  Note also that
$deg_{F_{w_{t}}}h(x_{11},...,x_{nn})=deg_{F_{w_{t}}}h'(x_{11},...,x_{nn})$, and $$deg_{F_{w_{t}}}f \leq deg_{F_{w_{t}}}h(x_{11},...,x_{nn}).$$

Let $p:\F \rightarrow \A_{t}$ be the natural projection.  It follows from the previous assertion that for elements $h,f$ as above,
\begin{equation}
\label{equality}
deg_{F_{w_{t}},S_{t}}(p(h(x_{11},...,x_{nn})+f))=deg_{F_{w_{t}}}(h(x_{11},...,x_{nn})).
\end{equation}
Indeed, if $h(x_{11},...,x_{nn})$ and $f$ are both combinations of standard monomials then this is obvious.  If only $h(x_{11},...,x_{nn})$ is a combination of standard monomials then we can apply mutations to $f$ to reduce to the previous case since the mutations can only decrease the
degree of $f$ and leave $f \in \mathcal{O}$.  Finally, if neither are a combination of standard monomials then we can apply mutations to
$h(x_{11},...,x_{nn})$ to reduce to the previous case.

Define a weighting $u$ of $\{1,...,n\}$ by $u(i)=i$.  Then by (\ref{equality}), for $h \in \F_{n}$,
$$deg_{F_{w_{t},S_{t}}}(h(\Delta_{1}^{t},...,\Delta_{n}^{t}))=deg_{F_{u}}(h).$$  Lemma \ref{symbprod} implies that
\begin{eqnarray*}
\sigma_{F_{w_{t},S_{t}}}(h(\Delta_{1}^{t},...,\Delta_{n}^{t}))&=&
\sigma_{F_{u}}(h)(\sigma_{F_{w_{t}}}(\Delta_{1}^{(t)}),...,\sigma_{F_{w_{t}}}(\Delta_{n}^{(t)}))\\&=&
\sigma_{F_{u}}(h)(\Delta_{1}^{(t+1)},...,\Delta_{n}^{(t+1)})
\end{eqnarray*}
By induction on $t$ this implies the assertion.

%
%
\end{proof}

We now have all the ingredients to prove Theorem \ref{mainthm}.
Indeed, by Proposition \ref{fact} the set of standard monomials
$$
\{\prod_{\iota \in I}x_{\iota}^{r_{\iota}} : r_{(i,i)} \leq i \text{
for } 1 \leq i \leq n \}
$$
is a free basis of $\A_{n}$ over $\I_{n}$.  Therefore repeated
application of Lemma \ref{BL} shows that these monomials form a free
basis of $\A$ over $\I$.  This completes the proof of the main theorem.

\end{document}